\newcommand{\Qp}{\mathbf{Q}_p}
\newcommand{\Zp}{\mathbf{Z}_p}
\newcommand{\Fp}{\mathbf{F}_p}
\newcommand{\ZZ}{\mathbf{Z}}
\newcommand{\Fpbar}{\overline{\mathbf{F}}_p}
\renewcommand{\geq}{\geqslant}
\newcommand{\End}{\operatorname{End}}
\newcommand{\Sym}{\operatorname{Sym}}
\newcommand{\ind}{\operatorname{ind}}
\newcommand{\GL}{\operatorname{GL}}
\newcommand{\Id}{\operatorname{Id}}
\newcommand{\smat}[1]{\left( \begin{smallmatrix} #1 \end{smallmatrix} \right)}
\newcommand{\G}{\operatorname{GL}_2(\Qp)}
\newcommand{\B}{\operatorname{B}_2(\Qp)}
\newcommand{\K}{\operatorname{GL}_2(\Zp)}
\newcommand{\Z}{\operatorname{Z}}
\newcommand{\Sp}{\operatorname{Sp}}
\renewcommand{\ss}{\operatorname{ss}}
\author{Laurent Berger}
\address{UMPA, ENS de Lyon \\
UMR 5669 du CNRS \\
Universit\'e de Lyon}
\email{laurent.berger@ens-lyon.fr}
\urladdr{perso.ens-lyon.fr/laurent.berger/}
\date{July 2011}
\title[Central characters for representations of $\G$]
{Central characters for smooth irreducible modular representations of $\operatorname{GL}_2(\Qp)$}
\subjclass{22E50}
\keywords{Smooth representation; admissible representation; parabolic induction; supersingular representation; central character; Schur's lemma}
\begin{document}

\begin{abstract}
We prove that every smooth irreducible $\overline{\mathbf{F}}_p$-linear representation of $\operatorname{GL}_2(\Qp)$ admits a central character.
\end{abstract}

\maketitle

\setlength{\baselineskip}{18pt}

\section*{Introduction}

Let $\Pi$ be a representation of $\G$. We say that $\Pi$ is smooth, if the stabilizer of any $v \in \Pi$ is an open subgroup of $\G$. We say that $\Pi$ admits a central character, if every $z \in \Z(\G)$ acts on $\Pi$ by a scalar. The smooth irreducible representations of $\G$ over an algebraically closed field of characteristic $p$, admitting a central character, have been studied by Barthel-Livn\'e in \cite{BL2,BL} and by Breuil in \cite{BR1}. The purpose of this note is to prove the following theorem.

\begin{enonce*}{Theorem A}
If $\Pi$ is a smooth irreducible $\Fpbar$-linear representation of $\G$, then $\Pi$ admits a central character.
\end{enonce*}

The idea of the proof of theorem A is as follows. If $\Pi$ does not admit a central character, and if $f = \smat{p&0\\0&p}$, then for any nonzero polynomial $Q(X) \in \Fpbar[X]$, the map $Q(f) : \Pi \to \Pi$ is bijective, so that $\Pi$ has the structure of a $\Fpbar(X)$-vector space. The representation $\Pi$ is therefore a smooth irreducible $\Fpbar(X)$-linear representation of $\G$, which now admits a central character, since $f$ acts by multiplication by $X$. It remains to apply Barthel-Livn\'e and Breuil's classification, which gives the structure of the components of $\Pi$ after extending scalars to a finite extension $K$ of $\Fpbar(X)$. A corollary of this classification is that these components are all ``defined'' over a subring $R$ of $K$, where $R$ is a finitely generated $\Fpbar$-algebra. This can be used to show that $\Pi$ is not of finite length, a contradiction.

\noindent\textbf{Acknowledgments}. I am grateful to C.\ Breuil, G.\ Chenevier, P.\ Colmez, G.\ Henniart, M.\ Schein and M.-F.\ Vign\'eras for helpful comments.

\section{Barthel-Livn\'e and Breuil's classification}
\label{bbl}

Let $E$ be a field of characteristic $p$. In this section, we recall the explicit classification of smooth irreducible $E$-linear representations of $\G$, admitting a central character. 

We denote the center of $\G$ by $\Z$. If $r \geq 0$, then $\Sym^r E^2$ is a representation of $\GL_2(\Fp)$ which gives rise, by inflation, to a representation of $\K$. We extend it to $\K\Z$ by letting $\smat{p&0\\0&p}$ act trivially. Consider the representation 
\[Ê\ind_{\K\Z}^{\G} \Sym^r E^2. \] 
The Hecke algebra
\[ \End_{E[\G]} \left( \ind_{\K\Z}^{\G} \Sym^r E^2 \right) \] 
is isomorphic to $E[T]$ where $T$ is a Hecke operator, which corresponds to the double class $\K\Z \cdot \smat{p & 0 \\ 0 & 1} \cdot \K$. If $\chi : \Qp^\times \to E^\times$ is a smooth character, and if $\lambda \in E$, then let
\[ \pi(r,\lambda,\chi) = \frac{ \ind_{\K\Z}^{\G} \Sym^r E^2}{T-\lambda} \otimes (\chi \circ \det). \]
This is a smooth representation of $\G$, with central character $\omega^r \chi^2$ (where $\omega : \Zp^\times \to \Fp^\times$ is the ``reduction mod $p$'' map). Let $\mu_\lambda : \Qp^\times \to E^\times$ be given by $\mu_\lambda \mid_{\Zp^\times} = 1$, and $\mu_\lambda(p)=\lambda$. If $\lambda = \pm 1$, then we have two exact sequences:
\begin{gather*} 
0 \to \Sp \otimes (\chi\mu_\lambda \circ \det) \to \pi(0,\lambda,\chi) \to \chi\mu_\lambda \circ \det \to 0, \\
0 \to \chi\mu_\lambda \circ \det \to \pi(p-1,\lambda,\chi) \to \Sp \otimes (\chi\mu_\lambda \circ \det) \to 0,
\end{gather*}
where the representation $\Sp$ is the ``special'' representation.

\begin{theo}\label{bbl2}
If $E$ is algebraically closed, then the smooth irreducible $E$-linear representations of $\G$, admitting a central character, are as follows:
\begin{enumerate}
\item $\chi \circ \det$;
\item $\Sp \otimes (\chi \circ \det)$;
\item $\pi(r,\lambda,\chi)$, where $r \in \{0,\hdots,p-1\}$ and $(r,\lambda) \notin \{(0,\pm 1), (p-1, \pm 1)\}$.
\end{enumerate}
\end{theo}

This theorem is proved in \cite{BL} and \cite{BL2}, which treat the case $\lambda \neq 0$, and in \cite{BR1}, which treats the case $\lambda = 0$. 

We now explain what happens if $E$ is not algebraically closed.

\begin{prop}\label{sir}
If $\Pi$ is a smooth irreducible $E$-linear representation of $\G$, admitting a central character, then there exists a finite extension $K/E$ such that $(\Pi \otimes_E K)^{\ss}$ is a direct sum of $K$-linear representations of the type described in theorem \ref{bbl2}.
\end{prop}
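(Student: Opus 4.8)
The plan is to realize $\Pi$ as a quotient of a single compact induction, to convert the problem into one about the Hecke operator $T$, and then to base change to a splitting field. First I would use smoothness together with the characteristic of $E$: the principal congruence subgroup $K_1 = \Id + p\operatorname{M}_2(\Zp)$ of $\K$ is a pro-$p$ group, so the space $\Pi^{K_1}$ is nonzero. The finite group $\K/K_1 = \GL_2(\Fp)$ acts on $\Pi^{K_1}$, and since the irreducible $E$-representations of $\GL_2(\Fp)$ are exactly the $\Sym^r E^2 \otimes {\det}^{j}$ (all of which are absolutely irreducible and defined over $\Fp$), some such $\sigma_0$ embeds in $\Pi^{K_1}$. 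Extending $\sigma_0$ to $\K\Z$ by letting $\smat{p&0\\0&p}$ act through the central character of $\Pi$ and applying Frobenius reciprocity, I obtain a nonzero $\G$-map $V := \ind_{\K\Z}^{\G}\sigma \to \Pi$, which is surjective because $\Pi$ is irreducible. After twisting by a suitable $\chi \circ \det$ I may take $\sigma = \Sym^r E^2$, so that $\End_{E[\G]}(V) = E[T]$.

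Next I would exploit the $E[T]$-module structure of $\operatorname{Hom}_{E[\G]}(V,\Pi) = \operatorname{Hom}_{\K\Z}(\sigma,\Pi)$, where $T$ acts by precomposition and the chosen surjection $\phi$ is a distinguished element. If this Hom space is finite-dimensional over $E$, then $T$ satisfies a nonzero $P(T) \in E[T]$ on the cyclic submodule $E[T]\cdot\phi$, whence $\phi \circ P(T) = 0$, i.e. $P(T)V \subseteq \ker\phi$, so that $\Pi$ is a quotient of $V/P(T)V$. Let $K$ be a splitting field of $P$, a finite extension of $E$; writing $V_K = V\otimes_E K$ and $P(T) = \prod_i (T-\lambda_i)^{e_i}$ with $\lambda_i \in K$, coprimality of the factors gives
\[ V\otimes_E K \big/ P(T) \cong \bigoplus_i \big( V_K / (T-\lambda_i)^{e_i} V_K \big), \]
so $\Pi \otimes_E K$ is a quotient of a finite direct sum of the modules $V_K/(T-\lambda_i)^{e_i}V_K$. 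Each of these has finite length, with Jordan–Hölder factors among those of $V_K/(T-\lambda_i)V_K$; by Theorem \ref{bbl2} and the two displayed exact sequences, every such irreducible factor is of one of the listed types, with all its parameters (the integer $r$, the eigenvalue $\lambda_i$, and the finitely many values of $\chi$) lying in $K$. Hence $\Pi\otimes_E K$ has finite length, and after possibly enlarging $K$ to contain these finitely many character values, its semisimplification $(\Pi\otimes_E K)^{\ss}$ is a direct sum of representations of the type described in Theorem \ref{bbl2}.

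The main obstacle is the finiteness $\dim_E \operatorname{Hom}_{\K\Z}(\sigma,\Pi) < \infty$, equivalently that $T$ acts locally finitely on $\operatorname{Hom}_{E[\G]}(V,\Pi)$; this amounts to the admissibility of $\Pi$, which I would try to obtain by base change to an algebraic closure $\overline{E}$. The delicate point is that $\Pi\otimes_E\overline{E}$ need not be irreducible, so one cannot apply the classification to it directly. Instead I would pass to an irreducible quotient of the finitely generated representation $\Pi\otimes_E\overline{E}$, which by Theorem \ref{bbl2} is admissible and is a quotient of some $V_{\overline{E}}/(T-\lambda_0)V_{\overline{E}}$, and then propagate this finiteness back to $\operatorname{Hom}_{\K\Z}(\sigma,\Pi)$. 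Making this descent precise — controlling the full representation $\Pi\otimes_E\overline{E}$ rather than a single irreducible constituent — is the step demanding the most care, and is where the input of Barthel–Livné and Breuil is used beyond the bare classification statement.
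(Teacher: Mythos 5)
Your architecture matches the paper's: realize $\Pi$ as a quotient of a compact induction $\ind_{\K\Z}^{\G}\Sym^r E^2$ (after a twist), find a nonzero $P \in E[T]$ with $P(T)\cdot\ind_{\K\Z}^{\G}\Sym^r E^2 \subseteq \ker\phi$, and pass to a splitting field of $P$; your Chinese-remainder decomposition at the end is a harmless variant of the paper's filtration by the partial products $P_i(T)$. The gap is exactly where you locate it, namely the existence of $P$, but the repair you sketch is not the right one. The paper does not obtain $P$ from admissibility of $\Pi$: it quotes (via \S 5.3 of \cite{P10}) Barthel--Livn\'e's structure theorem for the universal module, which says that every nonzero $\G$-submodule of $\ind_{\K\Z}^{\G}\Sym^r E^2$ contains $Q(T)\cdot\ind_{\K\Z}^{\G}\Sym^r E^2$ for some nonzero $Q \in E[T]$; this is a statement about supports of functions on the tree and is valid over an arbitrary coefficient field of characteristic $p$. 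Since $\ker\phi \neq 0$ (otherwise $\ind_{\K\Z}^{\G}\Sym^r E^2 \cong \Pi$ would be irreducible, whereas $(T-\lambda)\ind_{\K\Z}^{\G}\Sym^r E^2$ is a nonzero proper submodule), the polynomial $P$ drops out immediately. This submodule theorem is the input ``beyond the bare classification statement'' that your last paragraph is groping for.

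Your proposed substitute --- deduce $\dim_E \operatorname{Hom}_{\K\Z}(\sigma,\Pi) < \infty$ from admissibility obtained by base change to $\overline{E}$ --- cannot be completed as described. An irreducible quotient $\tau$ of the cyclic representation $\Pi \otimes_E \overline{E}$ does exist and is admissible by the classification, but this gives no control whatsoever on the kernel of $\Pi \otimes_E \overline{E} \to \tau$, hence no bound on $\dim_{\overline{E}}(\Pi \otimes_E \overline{E})^{K_1} = \dim_E \Pi^{K_1}$; admissibility of one constituent does not propagate to the whole representation. The direction of the argument is also backwards relative to how the theory is built: admissibility of irreducibles over $\overline{E}$ is itself a consequence of first producing the surjection from $\ind_{\K\Z}^{\G}\sigma/(T-\lambda)$, so invoking it here courts circularity. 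Finally, note that in the application in \S 2 one takes $E = \Fpbar(X)$, so an argument that only functions over algebraically closed coefficients would not suffice; the Barthel--Livn\'e submodule theorem over general $E$ is what makes the proposition usable there.
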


\begin{proof}
Barthel and Livn\'e's methods show (as is observed in \S 5.3 of \cite{P10}) that $\Pi$ is a quotient of 
\[ \Sigma = \frac{ \ind_{\K\Z}^{\G} \Sym^r E^2}{P(T)} \otimes (\chi \circ \det), \]
for some integer $r \in \{0,\hdots,p-1\}$, character $\chi : \Qp^\times \to E^\times$, and polynomial $P(Y) \in E[Y]$. Let $K$ be a splitting field of $P(Y)$, write $P(Y)=(Y-\lambda_1) \cdots (Y-\lambda_d)$, and let $P_i(Y) = (Y-\lambda_1) \cdots (Y-\lambda_i)$ for $i=0,\hdots,d$. The representations $P_{i-1}(T) \Sigma / P_i(T) \Sigma$ are then isomorphic to $\pi(r,\lambda_i,\chi)$, for $i=1,\hdots,d$.
\end{proof}

We finish this section by recalling that if $\lambda \neq 0$, then the representations $\pi(r,\lambda,\chi)$ are parabolic inductions (when $\lambda=0$, they are called supersingular). Let $\chi_1$ and $\chi_2 : \Qp^\times \to E^\times$ be two smooth characters, and consider the parabolic induction $\ind_{\B}^{\G} (\chi_1 \otimes \chi_2)$. The following result is proved in \cite{BL2} and \cite{BL}.

\begin{theo}\label{pic}
If $\lambda \in K \setminus \{0;\pm 1\}$, and if $r \in \{0,\hdots,p-1\}$, then $\pi(r,\lambda,\chi)$ is isomorphic to $\ind_{\B}^{\G}(\chi\mu_{1/\lambda}, \chi\omega^r \mu_\lambda)$.
\end{theo}

\section{Proof of the theorem}
\label{pta}

We now give the proof of theorem A. Let $\Pi$ be a smooth irreducible $\Fpbar$-linear representation of $\G$. We have $\Pi^{(1+p\Zp) \cdot \Id} \neq 0$ (since a $p$-group acting on a $\Fp$-vector space always has nontrivial fixed points), so that if $\Pi$ is irreducible, then $(1+p\Zp) \cdot \Id$ acts trivially on $\Pi$. If $g \in \Zp^\times \cdot \Id$, then $g^{p-1}=\Id$ on $\Pi$, so that $\Pi=\oplus_{\omega \in \Fp} \Pi^{g=\omega \cdot \Id}$. Since $\Pi$ is irreducible, this implies that the elements of $\Zp^\times \cdot \Id$ act by scalars. 

If $f = \smat{p&0\\0&p}$, then for any nonzero polynomial $Q(X) \in \Fpbar[X]$, the kernel and image of the map $Q(f) : \Pi \to \Pi$ are subrepresentations of $\Pi$. If $Q(f)=0$ on a nontrivial subspace of $\Pi$, then $f$ admits an eigenvector for an eigenvalue $\lambda \in \Fpbar$. This implies that $\Pi=\Pi^{f = \lambda \cdot \Id}$, so that $\Pi$ does admit a central character. If this is not the case, then $Q(f)$ is bijective for every nonzero polynomial $Q(X) \in \Fpbar[X]$, so that $\Pi$ has the structure of a $\Fpbar(X)$-vector space, and is a $\Fpbar(X)$-linear smooth irreducible representation of $\G$, admitting a central character. 

Let $E = \Fpbar(X)$. Proposition \ref{sir} gives us a finite extension $K$ of $E$, such that $(\Pi \otimes_E K)^{\ss}$ is a direct sum of $K$-linear representations of the type described in theorem \ref{bbl2}. The $\Fpbar$-linear representation underlying $(\Pi \otimes_E K)^{\ss}$ is isomorphic to $\Pi^{[K:E]}$, and hence of length $[K:E]$. We now prove that none of the $K$-linear representations of the type described in theorem \ref{bbl2} are of finite length, when viewed as $\Fpbar$-linear representations.

Let $\Sigma$ be one such representation, and let $\lambda \in K$ be the corresponding Hecke eigenvalue. 

\begin{prop}\label{defr}
There exists a subring $R$ of $K$, which is a finitely generated $\Fpbar$-algebra, such that $\Sigma = \Sigma_R \otimes_R K$, where $\Sigma_R$ is an $R$-linear representation of $\G$.
\end{prop}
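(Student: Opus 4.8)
The plan is to exhibit $R$ by explicitly descending the data defining $\Sigma$ to a finitely generated $\Fpbar$-subalgebra of $K$. The representations in Theorem \ref{bbl2} come in three families, all built from the induced representation $\ind_{\K\Z}^{\G} \Sym^r E^2$ together with a Hecke eigenvalue $\lambda$ and a character $\chi$. The key observation is that each such building block is specified by only finitely many elements of $K$: the eigenvalue $\lambda$, and the values $\chi(p) \in K^\times$ together with $\chi\mid_{\Zp^\times}$, which factors through the finite quotient $\Fp^\times$ and hence takes finitely many values in $K^\times$ (in fact in $\mu_{p-1}(\Fpbar) \subset \Fpbar$, since $\chi$ is smooth and $(1+p\Zp)$ acts trivially). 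So the natural candidate is to take $R = \Fpbar[\lambda, \chi(p), \chi(p)^{-1}]$, the $\Fpbar$-subalgebra of $K$ generated by these finitely many elements and the relevant inverses; this is manifestly a finitely generated $\Fpbar$-algebra.

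First I would set up the integral model of the induced representation. The space $\ind_{\K\Z}^{\G} \Sym^r E^2$ has a natural $\Fpbar$-basis indexed by cosets $g \in \G / \K\Z$ paired with a basis of $\Sym^r \Fpbar^2$, and the action of $\G$ on this basis has coefficients lying in $\Fpbar$ (the group action permutes cosets and acts through $\GL_2(\Fp)$ on the $\Sym^r$ factor). Thus $\ind_{\K\Z}^{\G} \Sym^r \Fpbar^2$ is already an $\Fpbar$-form, and $\ind_{\K\Z}^{\G} \Sym^r E^2 = (\ind_{\K\Z}^{\G} \Sym^r \Fpbar^2) \otimes_{\Fpbar} E$ with $E = \Fpbar(X)$. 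Next I would descend the three pieces of extra data. The Hecke operator $T$ already acts $\Fpbar$-rationally on the integral model, so passing to the quotient by $T - \lambda$ (for the family $\pi(r,\lambda,\chi)$) or by $P(T)$ requires only that $\lambda \in R$, which holds by construction; the resulting $R$-module is stable under $\G$ because both the $\G$-action and $T$ are defined over $\Fpbar \subseteq R$. Twisting by $\chi \circ \det$ multiplies the action of $g$ by $\chi(\det g) \in R^\times$, which lies in $R$ precisely because we adjoined $\chi(p)$ and its inverse, and the $\Zp^\times$-values of $\chi$ already lie in $\Fpbar$. For the families $\chi \circ \det$ and $\Sp \otimes (\chi \circ \det)$ the same descent applies, $\Sp$ being the quotient of $\ind_{\B}^{\G} 1$ by the trivial subrepresentation, an operation defined over $\Fpbar$.

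Assembling these, I would define $\Sigma_R$ to be the appropriate subquotient of $(\ind_{\K\Z}^{\G} \Sym^r \Fpbar^2) \otimes_{\Fpbar} R$, twisted by $\chi \circ \det$, and then verify that $\Sigma_R \otimes_R K = \Sigma$. This last identification is a base-change statement: since all the defining maps (the $\G$-action, the Hecke operator $T$, the quotient by $P(T)$, the twist) are $R$-linear and commute with $- \otimes_R K$, and since forming kernels and cokernels of $R$-linear maps commutes with the flat base change $R \to K$, the $K$-linear subquotient obtained from $\Sigma_R$ agrees with $\Sigma$.

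The step I expect to require the most care is the descent of the parabolic-induction picture supplied by Theorem \ref{pic}: when $\lambda \neq 0, \pm 1$, the representation $\pi(r,\lambda,\chi)$ is $\ind_{\B}^{\G}(\chi\mu_{1/\lambda}, \chi\omega^r\mu_\lambda)$, so the inducing characters involve $\lambda$ and $1/\lambda$, and I must ensure $1/\lambda \in R$ as well — this is why $R$ should be $\Fpbar[\lambda, \lambda^{-1}, \chi(p), \chi(p)^{-1}]$ rather than just $\Fpbar[\lambda,\chi(p)]$. Once $\lambda$ and $\chi(p)$ are inverted, every character appearing takes values in $R^\times$, and the parabolic induction $\ind_{\B}^{\G}$ of an $R^\times$-valued character has an evident $R$-linear model by the same coset-basis argument, so no further obstruction arises; the only thing to keep track of is that $K$ must contain $\lambda$, which is guaranteed because $\lambda \in K$ by hypothesis.
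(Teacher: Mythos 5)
Your argument is correct, and the overall strategy is the one the paper uses: take $R$ to be the $\Fpbar$-subalgebra of $K$ generated by the finitely many parameters $\lambda$ and $\chi(p)^{\pm 1}$ (the restriction $\chi\mid_{\Zp^\times}$ indeed lands in $\Fp^\times \subset \Fpbar$, as you note), and write down an explicit $R$-model of each representation in theorem \ref{bbl2}. The difference is in how the main case is modelled. The paper splits according to whether $\lambda \in \Fpbar$: if so, the quotient $\ind_{\K\Z}^{\G} \Sym^r \Fpbar^2/(T-\lambda)$ (or $\Sp$, or the trivial representation) is already defined over $\Fpbar$ and only the twist needs descending, so $R = \Fpbar[\chi(p)^{\pm 1}]$ suffices; if $\lambda \notin \Fpbar$, the paper invokes theorem \ref{pic} to rewrite $\Sigma$ as a principal series and takes $\Sigma_R$ to be the $R$-valued functions, a visibly free $R$-module. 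You instead descend the compact-induction presentation uniformly, defining $\Sigma_R$ as the cokernel of $T-\lambda$ on $\ind_{\K\Z}^{\G} \Sym^r R^2$ twisted by $\chi \circ \det$, and use right-exactness of $-\otimes_R K$; this proves the proposition as stated without appealing to theorem \ref{pic} at all, and is if anything more uniform (note that for this model you never need $\lambda^{-1}$, which is just as well since $\lambda = 0$ occurs in the supersingular case). The one caveat worth recording is that the proposition is immediately followed in the paper by the claim that $(X-\beta)^j \Sigma_R \neq (X-\beta)^{j+1} \Sigma_R$ inside $\Sigma$; for the function-space model this is evident because $\Sigma_R$ is free over $R$ and reduces mod $(X-\beta)$ to something nonzero, whereas for your cokernel model one must additionally check that the cokernel injects into $\Sigma$ and is not $(X-\beta)$-divisible — this follows from Barthel--Livn\'e's theorem that $\ind_{\K\Z}^{\G} \Sym^r$ is free over the Hecke algebra, so that the cokernel of $T - \lambda$ is $R$-free, but that is an extra input your write-up does not supply. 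So your proof establishes the proposition, but the paper's choice of model is what makes the next step of the global argument immediate.
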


\begin{proof}
If $\lambda \in \Fpbar$, then theorem \ref{bbl2}Ê shows that 
\[ \Sigma = \frac{ \ind_{\K\Z}^{\G} \Sym^r \Fpbar^2}{T-\lambda} \otimes_{\Fpbar} K(\chi \circ \det), \text{ or } \Sp \otimes_{\Fpbar} K(\chi \circ \det), \text{ or } K(\chi \circ \det). \] 
We can then take $R=\Fpbar[\chi(p)^{\pm 1}]$, and $\Sigma_R = (\ind_{\K\Z}^{\G} \Sym^r \Fpbar^2 / (T-\lambda)) \otimes_{\Fpbar} R(\chi \circ \det)$, or $\Sp \otimes_{\Fpbar} R(\chi \circ \det)$, or $R(\chi \circ \det)$, respectively.

If $\lambda \notin \Fpbar$, then by theorem \ref{pic}, we have
\[ \Sigma = \ind_{\B}^{\G}(\chi\mu_{1/\lambda}, \chi\omega^r \mu_\lambda). \]
We can take $R=\Fpbar[\lambda^{\pm 1}, \chi(p)^{\pm 1}]$, and let $\Sigma_R$ be the set of functions $f \in \Sigma$ with values in $R$.
\end{proof}

Let $\beta \in \Fpbar$ be such that $(X-\beta) \notin R^\times$, so that $(X-\beta)^j \Sigma_R \neq (X-\beta)^{j+1} \Sigma_R$ for all $j \in \ZZ$. The representation $\Sigma$ contains $\cup_{ j \in \ZZ} (X-\beta)^j \Sigma_R$, so that the underlying $\Fpbar$-linear representation is not of finite length, which is a contradiction. This finishes the proof of theorem A.

\providecommand{\bysame}{\leavevmode ---\ }
\providecommand{\og}{``}
\providecommand{\fg}{''}

\end{document}